\newtheorem{theorem}{Theorem}
\newtheorem*{theorem*}{Theorem}
\newtheorem{corollary}{Corollary}
\newtheorem{lemma}{Lemma}
\newtheorem{proposition}{Proposition}
\newtheorem{remark}{Remark}
\def\wt{\widetilde}
\def\R{\mathbb R}
\def\E{\mathbb E}
\def\Pb{\mathbb P}
\def\S{\mathbb S}
\def\X{\mathbb X}
\def\1{\mathbf 1}  
\newcommand{\Nc}{\mathcal N}
\newcommand{\Cc}{\mathcal C}
\newcommand{\Ic}{\mathcal I}
\newcommand{\Tc}{\mathcal T}
\newcommand{\Fc}{\mathcal F}
\newcommand{\Bc}{\mathcal B}
\begin{document}

\title{Retransmission performance in a\\ stochastic geometric cellular network model}

\author{Ingemar Kaj, Taisia Morozova}
\date{\today}

\maketitle

\begin{abstract}

  Suppose sender-receiver transmission links in a downlink network at given
  data rate are subject to fading, path-loss and inter-cell
  interference, and that transmissions either pass, suffer loss, or
  incur retransmission delay.  We introduce a method to obtain the
  average activity level of the system required for handling the buffered work
  and from this derive the resulting coverage probability and key
  performance measures.  The technique involves a family of
  stationary buffer distributions which is used to solving iteratively 
  a nonlinear balance equation for the unknown busy-link probability and
  then identifying throughput, loss probability and delay.
  The results allow for straightforward numerical investigation of
  performance indicators, are in special cases explicit, and may be
  easily used to study the trade-off between reliability, latency, and
  data rate.

\end{abstract}
  
\section{Introduction}

The recent letter \cite{B2022} highlights the importance of analyzing
spatial and temporal stationarity in queues, and points to the need
for a tractable computational setting for certain classes of queueing
networks including cellular networks for wireless communication.  We
consider a simplified slotted time Poisson-Voronoi type stochastic
model of a wireless communication system and analyze conditions under
which such systems may operate long term, hence addressing some of the
challenges indicated in \cite{B2022}.
Our particular focus is 
towards highly reliable
controlled delay systems where
emitter nodes store failed transmissions in buffers and retransmit. In
this sense we aim at identifying a generic stochastic model set-up to
assist investigation of long-run ultrareliable and low-latency
communication for 5G wireless networks, \cite{BDP2018}.

Stochastic geometry provides widely used tools and techniques for
modeling and performance analysis of wireless communication systems.
Random point processes are used to represent spatially deployed
transmission nodes while additional random markings model spatial and
temporal features of signal transmission links. Nodes which transmit
simultaneously on a common channel cause interference between the
actively involved links. Hence, within the network, competition
emerges for shared capacity resources, leading to a number of issues
related to this type of modeling, about system stability, stationarity
in time and space, loss versus delay, etc.  The use of planar Poisson
point processes and stochastic geometry to study radio terminals can
be traced back at least to \cite{TK1984}, followed by
e.g.\ \cite{S1992} and \cite{IHV1998}.
Comprehensive and rigorous presentations of stochastic geometry and
its applications for wireless networks are covered in \cite{BB2009}
and \cite{H2012}. Some of the more recent contributions relying on
stochastic geometric methods have analyzed downlink cellular networks
\cite{ABG2011}, uplink cellular networks \cite{NDA2013}, interference
alignment \cite{GWF2016}, D2D networks \cite{SB2017,YLQ2016}, and
performance of full duplex systems \cite{LPH2018}.  A few studies
treat elements of retransmission or buffer mechanisms using a
stochastic geometric approach; in \cite{YQ2018} each base station is
equipped with an infinite size buffer and \cite{NMH2014} considers
cooperating base stations and a setting where data is successfully
transmitted if at most one retransmission is required.


\subsection{Poisson-Voronoi stochastic geometric models}

We 
consider a wireless network 
with two types of nodes which represent user equipment (UE) and base
stations (BS) 
and adhere to the simplified but
common approach that the placement of nodes is initially random in the
spatial domain but static in time.
A transmission link consists of an
emitter-receiver pair and connects a BS node with a UE node.  It is
convenient to simply place the nodes in all of $\R^d$, $d\ge 1$, to
avoid
boundary effects, and make the assumption of independence
between disjoint spatial regions in the sense of the Poisson point process on
$\R^d$.  The transmission
of signals between pairs of the nodes is further assumed to be
synchronized and slotted in time. In a fixed slot each sender attempts
to transmit the equivalent of one symbol.  The emergence of
emitter-receiver pairs and hence the pattern of signal transmit
connections will rely on the principle of shortest distance measured
from a UE to the nearest BS.  As a consequence, the service domains of
the BS-nodes form exactly as the random spatial cells known as the
Poisson-Voronoi tesselation.  In this framework, a number of detailed
assumptions are required in order to characterize how transmission
links emerge, whether transmitted signals are successfully received or
not, and how the performance of the system might be inferred.  Various
examples and research directions are discussed in the general
literature sources listed above.

Our 
present approach 
combines some of the typical and well-established model assumptions
and techniques with newly developed tools.  Thus, following a
tradition in wireless network modeling, signals are subject to
Rayleigh fast fading and path loss along a radially decreasing
attenuation function (either bounded or singular at
zero-distance). Moreover, simultaneous random fading variables are
assumed spatially independent and interference caused by shared
communication links is treated as added noise. For now we restrict to
downlink traffic for which the BS is the transmitter and the UE the receiver.
Among novel tools we introduce a counting method based on the
traditional iterative Lindley equation method for recording
retransmission requests as well as losses at the sender nodes.  This
allows us to present a consistent network model where failed
transmission attempts are either stored in a buffer and retransmitted
after some queueing delay, or lost due to lack of available buffer
space.  The main contribution is a technique to determine for given link
data rate input whether the system is able to process the work over
time and, if so, quantify the resulting loss and delay.

Briefly, we assume that new traffic arrives randomly to the system,
such that in any given time slot each designated transmission link is
requested to handle a new signal with probability $p$, $0\le p\le 1$,
independently between slots and between emitters.  In response, the
system needs to find a mode of operation in which a typical emitter is
active with some probability $q$, $p\le q\le 1$, the {\it busy-link
  probability}, which is large enough to allow new signal
transmissions as well as retransmissions in case of failed attempts.
Interference in our setup occurs between cells, not within cells, and
we consider the successful coverage of signals to be determined
by a meanfield criteria of the type $\mathrm{SINR}>T$, where
$\mathrm{SINR}$ is a suitable signal to interference-and-noise ratio
and $T>0$ a fixed threshold value.
The results are parameterized by an integer $K$ which represents the
maximal buffer size.
For finite $K$ and given $p$,
we use the
{\it coverage probability} $V_T(q)=\Pb(\mathrm{SINR}>T)$ to identify
the stationary buffer distribution, which in particular involves
solving a specific balance equation to obtain $q$ as function of
$p$. From this we obtain the buffer size distribution and efficient
means to quantify loss and delay.  The simplest case $K=0$ is a
pure-loss model with no retransmission and hence $q=p$.  The main
results in this work apply to the finite buffer model with $K\ge 1$.
which allows studying the trade-off between smaller loss and longer
delay as the buffer size increases.  For the idealized case of an
infinite buffer and hence no-loss, we identify a region of
stability. There exists a critical $p_c$, such that for traffic rates
$p<p_c$, there is a busy link probability $q$, $p<q<1$, for which the
wireless network model can operate in a steady-state sustained by a
geometric buffer size distribution.

\section{Network model}

\subsection{Voronoi cells acting as servers}

We let $\Phi_\mathrm{UE}=\{x_i\}$ and
$\Phi_\mathrm{BS}=\{x_j\}$ denote two independent Poisson point
processes in $\R^d$ with intensity measures $\lambda_0\,dx$ and
$\lambda_1\,dx$,  respectively, 
representing the locations of nonmobile user equipment and
locations of base stations. 
Formally, we consider a generic probability space $(\Omega,\Fc,\Pb)$
and define the point processes $\Phi$ ($\Phi_\mathrm{BS}$ or
$\Phi_\mathrm{UE}$) as random counting measures on $(\R^d,\Bc)$, where
$\Bc$ 
denotes the 
Borel sets on
$\R^d$. Hence, $\Phi: \Omega\times \Bc\mapsto \{0,1,\dots,+\infty\}$
is a mapping such that $\omega\mapsto \Phi(\omega,B)$ is an (extended)
integer-valued random variable for each $B\in\Bc$ and
$B\mapsto \Phi(\omega,B)$ is a Borel measure on $\R^d$ for each
outcome $\omega\in\Omega$. Dropping $\omega$ in notation we also write
$\Phi(B)=\int_B \Phi(dx)$ and, for integrable functions
$f$ on $\R^d$, 
$\int f(x)\,\Phi(dx)$.

We consider the Voronoi
tessellation formed by the Poisson points $\Phi_\mathrm{BS}$ as nodal
center points for Voronoi cells $\{\Cc_j, x_j\in \Phi_\mathrm{BS}\}$,
see e.g.\ \cite{SKMC2013} for the theory of
Poisson-Voronoi tessellations in $\R^d$. For example, the
volume $|\Cc_j|$ of each Voronoi cell has expected value
$\E[|\Cc_j|]=1/\lambda_1$.   With reference to the Palm measure
associated with $\Phi_\mathrm{BS}$ we assume 
first that one base station is located at the origin $0\in \R^d$,
and we let $\Cc$ be the fixed cell which contains the origin.
We consider the random sum  $\int_\Cc f(x)\,\Phi_\mathrm{UE}(dx)=
\sum_{x_i\in\Phi_\mathrm{UE} \cap \Cc} f(x_i)$ over all users within
the fixed cell
and recall that the
expected value of such sums evaluates as
\begin{equation}\label{eq:cellexpectation}
  \E\Big[ 
    \int_\Cc f(x)\,\Phi_\mathrm{UE}(dx)|\Phi_\mathrm{BS}  \Big]=
  \lambda_0 \int_{\R^d} f(x)\, e^{ -\lambda_1 |B(0,1)|
    |x|^d}\,dx
\end{equation}
\cite{FZ1996}, $d=2$. Here, $f$ is an integrable function, $|x|$
is euclidean distance, and $|B(0,1)|=\pi^{d/2}/\Gamma(1+d/2)$ is the
volume of the unit ball in $\R^d$.
For $f\equiv 1$ we obtain the
expected number of users
in a typical Voronoi cell as
\begin{equation}\label{eq:nmbusers}
w=  \E[\Phi_\mathrm{UE}(\Cc)|\Phi_\mathrm{BS}]
  =   \lambda_0/\lambda_1,
\end{equation}
which we may also think of as the bandwidth of the cell.
The ratio of (\ref{eq:cellexpectation}) and (\ref{eq:nmbusers}) is the
cell-average 
\begin{align}\label{eq:cell-average}
  \frac{\E[\langle \Phi_\mathrm{UE},f\cdot \1_\Cc\rangle|\Phi_\mathrm{BS}]}
  { \E[\Phi_\mathrm{UE}(\Cc)|\Phi_\mathrm{BS}] }
=  \lambda_1 \int_{\R^d} f(x)\, e^{ -\lambda_1 |B(0,1)| |x|^d}\,dx.
\end{align}
Here,  we may define a random variable $X$, which
represents the locations $(0,X)$ of the nodes in  a randomly
chosen transmission link within the cell $\Cc$, by the distribution
function
\[
  \Pb_\Cc(X\in B) =\lambda_1 \int_B  e^{ -\lambda_1 |B(0,1)|
    |x|^d}\,dx,\quad B\in \Bc.
\]
For a radial function, $f(x)=f(|x|)$, 
\begin{align}    \nonumber
 &\E_\Cc[f(X)]  =
  \lambda_1 \int_{\R^d} f(|x|)\, e^{ -\lambda_1 |B(0,1)| |x|^d}\,dx\\
  &\quad = \lambda_1 |B(0,1)|\int_0^\infty f(u)\,  e^{-\lambda_1|B(0,1)|u^d}du^{d-1}\,du,
\label{eq:radiusdistribution}
\end{align}
by which we recover the familiar result that averaging over Voronoi
cells is the same as integrating over a Rayleigh distributed radius
distance, compare e.g.\ \cite{NDA2013}. For convenience using our approach, we
rewrite (\ref{eq:radiusdistribution}) as follows.  Let
$\ell_0=(\lambda_1|B(0,1)|)^{-1/d}$ and associate with the Voronoi
cell $\Cc$ the corresponding Euclidean ball $B(0,\ell_0)$ in $\R^d$ of
radius $\ell_0$ and preserved expected volume
$|B(0,\ell_0)|=1/\lambda_1=\E[|\Cc|]$. Then for $\ell\ge 0$,
$\Pb_\Cc(|X|\le \ell) =1-e^{(\ell/\ell_0)^d}$ and
\begin{align}\label{eq:linkdistribution}
  \E_\Cc[f(|X|)] =   \int_0^\infty f(v^{1/d}\ell_0)\,  e^{-v}\,dv.
\end{align}
In practice, to evaluate key performance measures such as the coverage
probability we want to apply (\ref{eq:radiusdistribution}) or 
(\ref{eq:linkdistribution}) 
to functions $H(X,\Phi_X)$
of the link
$X$ and, given $X$, of the collection $\Phi_X$ of additional
nodes in surrounding cells which interfer with the link in $\Cc$.
To identify such functionals $H$ of interest and building further on the
point of view of considering the cells as servers of a queueing
network, we consider next the signal to noise and interference ratio for
the downlink
traffic scenario.

\subsection{Signal to interference and noise ratio}

Given fixed Poisson nodes and a Voronoi cell $\Cc$ we
consider a random transmission link consisting of an emitter-receiver
pair $(0,X)$ with distribution as introduced above.
By translation invariance of the Poisson
processess we place the receiver UE at the origin $0\in \R^d$ and the
emitter BS at $X\in \R^d$.
Downlink traffic from BS to UE nodes is subject to the interference caused by
base stations serving users in surrounding cells. The impact
in this situation
of the interference field on performance
in a
pure-loss system is analyzed in detail in \cite{ABG2011}. Our approach runs
in parallel but takes into account the case of failed attempts where
the signal packet is placed in a buffer and marked for
retransmission.
Each base station connects to all users nearer
to this BS than to any other service node.  Over the round of a single
time slot we assume that the service rate of the BS is proportional 
to the number of UE nodes in the cell, and that each UE requires a link 
to be set up with probability $p$, sufficiently small.  Hence it is assumed 
that the BS picks one UE uniformly located in the cell with probability
$\rho=p \lambda_0/\lambda_1<1$, 
the average measure of service load per slot.



To obtain explicit and computationally tractable
results we 
restrict 
to independent Rayleigh fading
signals. Hence, the signal power of an active transmitter in a given
slot is subject to fast fading determined by an exponential random
variable $S$ with mean $1/\mu$.  On the other hand, to balance the
tradeoff between generality and tractability, we allow 
pathloss mechanisms in a parametrized class of radial,
decreasing attenuation functions $a$, either of the standard shape
singular at the origin, $a(r)\sim r^{-\beta}$, $\beta>d$, or bounded
at the origin.  Our assumptions conform to a {\it standard stochastic
  scenario} with omni-directional path loss \cite[Section 2.3]{BB2009}.

Conditionally, given the cell $\Cc$ containing the origin and the
position of a target link $(0,X)=(0,x)$ between the emitter at $x$
and the receiver at $0$, the signal power reaching the
destined receiver is $Sa(x)$. Other transmitters which are active
during the same slot may inflict interference. A signal transmission
is successful in case a connection between the paired BS and UE is
established and the signal to interference and noise ratio,
$\mbox{SINR}$,
\[   
  \mbox{SINR(x)}= \frac{Sa(x)}{I(0,x)+\sigma^2}\,
  \1_{\{\mbox{link $(0,x)$ active}\}},
\]
exceeds a preset threshold value $T$.  Here, $\sigma^2\ge 0$
represents background noise (deterministic, for simplicity) and
$I(0,x)$ is the random interference power aggregated over all
simultaneous transmissions in the same slot.  In our scenario random
node locations are set at an initial time slot $n=0$. During each
subsequent slot $n\ge 1$ a new ratio SINR$_n(x)$ is generated at the
chosen link based on activity status and signal power of the link
itself as well as that of interfering links in the surrounding cells.
Now, independently for each slot $n\ge 1$, let $(A_n,S_n)$ be
independent random variables on the probability space
$(\Omega,\Fc,\Pb)$, such that $A_n\sim \mathrm{Bin}(1,p)$ is an
indicator variable for the event that the target link has a new signal
to transmit and $S_n\sim \mathrm{Exp}(\mu)$ is the associated signal
power.  Similarly, for each slot we consider the points $\Phi_x=
\Phi_\mathrm{BS}\setminus\{x\}$ of potential
emitter positions of intercellular interference links,
and let $\{(A_n^k,S_n^k)_{n\ge 1}, x_k\in\Phi_x\}$ be sequences of
independent marks on $(\Omega,\Fc,\Pb)$ associated with activity level
and signal strength of the interfering links.
The interference at the receiver is caused by the cumulative signal
power from other active emitters, so that in slot $n$
\begin{equation}\label{eq:sinr_general}
  I_n(0,x)=\sum_{x_k\in \Phi_x} S_n^ka(x_k)\, \1_{\{\mbox{emitter at
      $x_k$ active}\}}.
\end{equation}
To identify the intensity measure of $\Phi_x$ we
note that 
other BS's occur with intensity $\lambda_1$ each with
$\lambda_0/\lambda_1$ potential connections, 
and must be
located at a distance farther away from the origin than $|x|$. Hence
the thinned spatial intensity of the conditioned Poisson interference
field $\Phi_x$ is
\begin{equation}\label{eq:downlinkintensity}
  m_x(dy)=\lambda_1\frac{\lambda_0}{\lambda_1}\, \1_{\{|y|>|x|\}}\,dy
=\lambda_0 \, \1_{\{|y|>|x|\}}\,dy.
\end{equation}


Next we move on to clarify the meaning in (\ref{eq:sinr_general}) of
the phrase ``emitter at $x_k$ is active''.


\paragraph{Pure-loss model}

The reference model we call pure-loss is the case where failed
transmission are considered lost and links get occupied only due to
new signals arriving. Hence the target link is busy in slot $n$ if and
only if $A_n=1$ and an emitter at $x_k$ is active if and only if
$A_n^k=1$.  The signal to interference and noise ratio in slot $n$ is
therefore
\begin{equation}\label{eq:sinr_pureloss}
  \mbox{SINR}_n(x)= \frac{A_nS_n a(x)}{I_n(x)+\sigma^2},\;\; 
  I_n(x)=\sum_{x_k\in \Phi_x} A_n^k S^k_n a(x_k),
\end{equation}
where it will be shown that the infinite sum random variable
$I_n(x)$ is well-defined. Moreover, $(I_n(x))_{n\ge 1}$ is an
i.i.d.\ sequence. In principle, this is the situation
studied in ref's \dots

\paragraph{Retransmission model}

To study retransmission dynamics we asssociate with the transmitters
of each link a buffer mechanism which keeps record of any failed
signal transmission and re-sends buffered signals at the first
available slot. Let $B_n$ denote the number of buffered packets on the
selected link and $B_n^k$ the buffer size of the interferer at
$x_k\in\Phi_x$. The event marked by the indicator function in
(\ref{eq:sinr_general}) is $A_n^k+B^k_{n-1}\ge 1$, since the emitter
at $x_k$ is active in slot $n$ either if there is a new signal
($A_n^k=1$) or there exists at least one previously buffered signal
($B_{n-1}^k\ge 1$).

Rather than attempting to trace the exact distribution and the
dependency structure arising over time in the collection of buffer
variables, $(B_n,(B_n^k))_{n\ge 1}$, we propose a simplified but
tractable approximation, as follows. Let
\begin{equation}\label{eq:sinr_loss}
  \mbox{SINR}_n(x)= \frac{Q_nS_n a(x)}{\Ic_n(x)+\sigma^2},\;\;
  \Ic_n(x)=\sum_{x_k\in \Phi_x} Q_n^k S^k_n a(x_k),
\end{equation}
where $Q_n\sim \mathrm{Bin}(1,q_n)$ and
$\{Q_n^k\sim\mathrm{Bin}(1,q_n) ,x_k\in \Phi_x\}$ are independent
Bernoulli random variables. The parameter $q_n$, $q_n\ge p$,
introduced here is a mean-field approximation of the collection of
probabilities
$\{\Pb(A_n+B_{n-1}\ge 1),\Pb(A_n^k+B_{n-1}^k\ge 1),x_k\in\Phi_x\}$, to
be specified below in (\ref{eq:pi_recursion}) as a steady-state busy link probability in
slot $n$.
The essence of introducing the Poisson interference field $\Ic_n(x)$
in (\ref{eq:sinr_loss}) as an approximation of $I_n(x)$ in
(\ref{eq:sinr_general}), is that now, given $\Phi_x$, the random
variables $(\mathrm{SINR}_n(x))_{n\ge 1}$ are independent.  Of
course, by taking $q_n=p$, $n\ge 1$, we recover
(\ref{eq:sinr_pureloss}). A preliminary simulation study in \cite{MK2022}
provides some evidence for considering  (\ref{eq:sinr_loss}) a
reasonable approximation of (\ref{eq:sinr_general}). Our approximation
underlying (\ref{eq:sinr_general}) can be compared to \cite{YQ2018}
Assumption 1, and our indicator variables $Q_n^k$ correspond to the
indicators of active states, $\zeta_{x,t}$, in \cite{YQ2018}.

\subsection{Coverage probability}\label{seq:covprob}

The conditional coverage probability associated with
(\ref{eq:sinr_loss}) of a successful transmission in slot $n$ is a
function of $q_n$ that we denote by
\begin{equation}\label{eq:covprobdef} 
V_T(q_n\, ;x,\Phi_x)=\Pb(\mathrm{SINR}_n(x)>T|\Phi_x).
\end{equation}
For now we fix $q=q_n>p$ and take expectations over interferer
locations $\Phi_x$ to obtain the coverage probability given the link
$(0,x)$, as
\[
V_T(q\,;x)=\E[V_T(q\,; x,\Phi_x)],
\]
which is a radial function of $|x|$.
Consequently, the cell-averaged coverage probability is
\[
V_T(q)= \lambda_1 \int_{\R^d} V_T(q\,;
x)e^{-\lambda_1|B(0,1)||x|^d}\,dx.
\]
Next we obtain  the function $V_T(q)$,
$p\le q\le 1$, as a function of fading
intensity $\mu$, strength of noise $\sigma^2$, threshold $T$, and
attenuation $a(x)$. To ensure that the corresponding Poisson integral with
respect to $\Phi_x$ is well-defined we assume that $a$ is integrable
with respect to the intensity measure $m_x$ in
(\ref{eq:downlinkintensity}), in the sense that for every $x$, $|x|>0$,
we have $\int_{|y|\ge |x|}
\frac{\lambda_0\,dy}{1+a(x)/a(y)}<\infty$.

\begin{lemma}
\begin{align*} 
  &  V_T(q\,; x,\Phi_x)=  \\
  & \quad q 
  \exp\Big\{\int_{\R^d}  \ln \Big(1-\frac{q\,T a(y)}{a(x)+T 
    a(y)} \Big)\,\Phi_x(dy)-\frac{\mu T\sigma^2}{a(x)}\Big\},
\end{align*}
\[
  V_T(q\,; x)=  
    q 
  \exp\Big\{-\int_{|y|\ge |x|} \frac{\lambda_0 q\,T a(y)}{a(x)+T 
    a(y)}\,dy-\frac{\mu T\sigma^2}{a(x)}\Big\},
\]
and
\[
 V_T(q) = \int_0^\infty V_T(q\,; v^{1/d}\ell_0)\,e^{-v}\,dv.
\]
\end{lemma}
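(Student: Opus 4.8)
The plan is to compute the conditional coverage probability in three nested stages, each corresponding to one of the three displayed formulas, using the Rayleigh (exponential) fading assumption to turn the SINR threshold event into an exponential moment.

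\textbf{Step 1: Conditioning on the fading and interference.} Fix the link $(0,x)$ and the interferer configuration $\Phi_x$. On the event $\{Q_n=1\}$ (probability $q$), the event $\{\mathrm{SINR}_n(x)>T\}$ reads $S_n a(x) > T(\Ic_n(x)+\sigma^2)$, i.e. $S_n > (T/a(x))(\Ic_n(x)+\sigma^2)$. Since $S_n\sim\mathrm{Exp}(\mu)$ is independent of $\Ic_n(x)$, conditioning on $\Ic_n(x)$ and using $\Pb(S_n>t)=e^{-\mu t}$ gives
\[
\Pb(\mathrm{SINR}_n(x)>T\mid \Phi_x) = q\,\E\!\Big[\exp\Big(-\tfrac{\mu T}{a(x)}(\Ic_n(x)+\sigma^2)\Big)\,\Big|\,\Phi_x\Big]
= q\, e^{-\mu T\sigma^2/a(x)}\,\E\!\Big[e^{-\tfrac{\mu T}{a(x)}\Ic_n(x)}\,\Big|\,\Phi_x\Big].
\]
Now $\Ic_n(x)=\sum_{x_k\in\Phi_x}Q_n^k S_n^k a(x_k)$ is a finite-or-countable sum of independent terms given $\Phi_x$, so the conditional Laplace transform factorizes over the points $x_k$. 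For a single term, writing $s=\mu T/a(x)$,
\[
\E\big[e^{-s Q_n^k S_n^k a(x_k)}\big] = (1-q) + q\,\E\big[e^{-s S_n^k a(x_k)}\big] = (1-q) + q\,\frac{\mu}{\mu + s\,a(x_k)} = 1 - \frac{q\, s\, a(x_k)}{\mu + s\, a(x_k)}.
\]
Substituting $s=\mu T/a(x)$ and simplifying the fraction gives $1 - q\,T a(x_k)\big/(a(x)+T a(x_k))$. Taking the product over $x_k\in\Phi_x$ and writing it as the exponential of a sum (equivalently an integral against the counting measure $\Phi_x(dy)$) yields the first displayed formula.

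\textbf{Step 2: Averaging over interferer locations.} Next take $\E[\,\cdot\,]$ over $\Phi_x$, which by (\ref{eq:downlinkintensity}) is a Poisson point process with intensity $m_x(dy)=\lambda_0\,\1_{\{|y|>|x|\}}\,dy$. Apply the Laplace functional of a Poisson process: for a measurable $g\ge 0$, $\E\big[\exp(-\int g\,d\Phi_x)\big]=\exp\big(-\int (1-e^{-g(y)})\,m_x(dy)\big)$. Here $e^{-g(y)} = 1 - q\,T a(y)/(a(x)+T a(y))$, so $1-e^{-g(y)} = q\,T a(y)/(a(x)+T a(y))$, and the deterministic factor $e^{-\mu T\sigma^2/a(x)}$ passes through untouched. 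This produces the second displayed formula; the integrability hypothesis $\int_{|y|\ge|x|}\lambda_0\,dy/(1+a(x)/a(y))<\infty$ stated just before the lemma is exactly what guarantees the exponent is finite, hence that $\Ic_n(x)$ (and the original $I_n(x)$) is a well-defined, a.s.\ finite random variable.

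\textbf{Step 3: Cell-averaging.} Finally, integrate $V_T(q\,;x)$ over the link-location distribution of $X$. Since $V_T(q\,;x)$ depends on $x$ only through $|x|$, apply the radial change-of-variables (\ref{eq:linkdistribution}) with $f(|x|)=V_T(q\,;x)$: substituting $|x| = v^{1/d}\ell_0$ turns the cell average into $\int_0^\infty V_T(q\,; v^{1/d}\ell_0)\,e^{-v}\,dv$, which is the third formula. I expect Step 1 to be the only place requiring genuine care — specifically, justifying the factorization of the conditional Laplace transform over the (countably infinite) interferer points and the interchange of expectation with the infinite product, which again rests on the integrability assumption; Steps 2 and 3 are then routine applications of the Poisson Laplace functional and of (\ref{eq:linkdistribution}), respectively.
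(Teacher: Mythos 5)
Your proposal is correct and follows essentially the same route as the paper's proof: condition on $Q_n$ and the exponential fading to reduce the threshold event to a Laplace transform of $\Ic_n(x)$, factorize that transform over the points of $\Phi_x$ to get the first formula, apply the Poisson Laplace functional with intensity $m_x$ for the second, and use the radial representation (\ref{eq:linkdistribution}) for the third. The only difference is that you spell out the role of the integrability hypothesis and the justification of the infinite product, which the paper leaves implicit.
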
  

\begin{proof}
Taking expectations first over $Q_1$ and then $S_1$ in
(\ref{eq:sinr_loss}), 
\[
  \Pb(\mathrm{SINR}_1(x)>T|\Phi_x)
    =q \,e^{-\mu T\sigma^2/a(x)}\E[e^{-\mu T \Ic_1(x)/a(x)}|\Phi_x]   
\]
Next, putting $\theta_x=\mu T/a(x)$,
\begin{align*}
  \E[e^{-\theta_x \Ic_1(x)}|\Phi_x]
  = \prod_{x_k\in\Phi_x} (1-q(1-\E[e^{-\theta_x S_k a(x_k)}])).
\end{align*}
Since $\E[e^{-\theta_x S_k a(x_k)}]=\mu/(\mu+\theta_x a(x_k))$, 
\[
  \ln  \E[e^{-\theta_x \Ic_1(x)}|\Phi_x]
  =\int_{\R^d}  \ln \Big(1-\frac{q\,\theta_xa(y)}{\mu+\theta_x
    a(y)} \Big)\,\Phi_x(dy).
\]
Summing up, $\ln   V_T(q\,; x,\Phi_x)$ equals
\begin{align*}
    \ln q -\frac{\mu T\sigma^2}{a(x)}
  +\int_{\R^d}  \ln \Big(1-\frac{q\,\theta_xa(y)}{\mu+\theta_x
    a(y)} \Big)\,\Phi_x(dy),
\end{align*}  
which after reinserting $\theta_x$ is the first statement in the
lemma. The second claim is the regular calculation of exponential
moments of a Poisson integral. The representation of $V_T(q)$ in polar
coordinates is then obtained from (\ref{eq:linkdistribution}). 
\end{proof}


\section{Performance measures}

In this section we analyze the performance of the wireless network
averaged over time and averaged over link and interferer positions.
To keep the results concrete and
explicit, 
as a further preparation we evaluate the
coverage probability for a specific class of path loss functions. Then
we study loss and delay at a marked transmission link by
identifying 
the corresponding buffer size as a nonhomogeneous birth-death Markov
chain. The jump probabilities are updated from slot to slot dependent on
the system busy link probabilities, chosen at this point to represent
a mean field dynamics of the entire system.  By studying the
associated nonlinear recursion we may then apply a result from
classical Markov chain theory \cite{IM1976} and verify that the
nonhomogeneous buffer Markov chain is strongly ergodic. 
From this we obtain throughput, loss probability and delay as functions
of the data rate input.

\subsection{Utilization under normalized
  attenuation} \label{seq:utilization} 

From now on we specify the function $a$, which determines the strength of
interaction in the 
SINR-ratio.
First we use the radius
$\ell_0$ appearing in (\ref{eq:linkdistribution}) as a reference
distance and prescribe the normalization property $a(\ell_0)=1$.  This
amounts to the assumption that if we add more base stations per volume
unit, by increasing the parameter $\lambda_1$, then the transmission
protocols are also tuned in order to match the new, on average
smaller, size of the Voronoi cells.  To achieve this setting and stay
close to standard assumptions, we introduce the scaled attenuation
function
\begin{equation}\label{eq:pathloss}
  a_{\kappa,\beta}(x)=\frac{1+\kappa}{|x/\ell_0|^\beta+\kappa},\quad
  x\in \R^d,\quad \beta>d,
\end{equation}
which satisfies the condition for Lemma 1.
Here, $\beta$ is the pathloss exponent and the parameter
$\kappa\ge 0$ controls the close range attenuation,
$a_{\kappa,\beta}(0)=1+1/\kappa$.  The case $\kappa=0$ is tractable
for computations and is frequently used in the literature, but may be
considered nonrealistic because of the singularity of $a_{0,\beta}$ as
$x\to 0$. In relation to this, \cite{AAB2018} discusses ``a broad
class of path loss models that are physically reasonable'',
characterized by the total average received power (for downlink
traffic) being finite. For comparison, we observe that the average
power $P_\mathrm{avg}$ displayed in \cite{AAB2018} corresponds in the
present case to
\[
  P_\mathrm{avg}=\E\Big[ \sum_{x_k\in \Phi} S^k\,
a_{\kappa,\beta}(x_k)\Big ]
=\frac{1}{\mu}\int_0^\infty  \frac{1+\kappa}{v^{\beta/d}+\kappa}\,dv,
\]
which is finite whenever $\kappa>0$ and $\beta>d$.
We reserve the additional notation $\delta$ for the ratio
\[
  \delta=\beta/d>1.
\]
\begin{lemma} \label{lem:covprob_updown}
The coverage probability $V_T(q)$ obtained in Lemma 1
has the 
representation 
\begin{align*}
    V_T(q) = \int_0^\infty q\,
&\exp\Big\{-\frac{\mu
  T\sigma^2(\kappa+v^\delta)}{1+\kappa}\\
&        - q\int_v^\infty \frac{wT(\kappa+v^\delta)}
              {\kappa+u^\delta+T(\kappa+v^\delta)}\,du\Big\}e^{-v}\,dv.
\end{align*}
\end{lemma}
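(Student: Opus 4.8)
The plan is to take the middle identity of Lemma~1,
\[
  V_T(q\,;x)= q\exp\Big\{-\int_{|y|\ge|x|}\frac{\lambda_0 q\,T a(y)}{a(x)+T a(y)}\,dy-\frac{\mu T\sigma^2}{a(x)}\Big\},
\]
specialize the attenuation to $a=a_{\kappa,\beta}$ from (\ref{eq:pathloss}), evaluate it along the radial slice $x=v^{1/d}\ell_0$ that enters the last line of Lemma~1, reduce the $d$-dimensional interference integral to a one-dimensional one, and finally integrate the result against $e^{-v}\,dv$.

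First I would record the values of $a_{\kappa,\beta}$ that are needed. Since $a_{\kappa,\beta}(x)$ depends on $x$ only through $|x/\ell_0|^\beta$ and $|v^{1/d}\ell_0/\ell_0|^\beta=v^{\beta/d}=v^\delta$, one gets $a_{\kappa,\beta}(v^{1/d}\ell_0)=(1+\kappa)/(\kappa+v^\delta)$; this turns the noise term $\mu T\sigma^2/a(x)$ directly into $\mu T\sigma^2(\kappa+v^\delta)/(1+\kappa)$, the first term in the exponent of the claimed formula. For the interference integral, write a generic interferer position as $y$ with $s=|y/\ell_0|$, so that $a_{\kappa,\beta}(y)=(1+\kappa)/(\kappa+s^\beta)$; clearing the common factor $1+\kappa$ gives
\[
  \frac{T a(y)}{a(x)+T a(y)}=\frac{T(\kappa+v^\delta)}{(\kappa+s^\beta)+T(\kappa+v^\delta)}.
\]

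The one genuine step is the change of variables that collapses $\int_{|y|\ge|x|}(\cdot)\,dy$ to a scalar integral. Passing to polar coordinates and substituting $u=|y/\ell_0|^d=s^d$, so that $s^\beta=u^\delta$, the constraint $|y|\ge|x|$ becomes $u\ge v$, and using $\ell_0^d=1/(\lambda_1|B(0,1)|)$ together with the surface area $d|B(0,1)|$ of the unit sphere, the Jacobian bookkeeping collapses to the shorthand $\lambda_0\,dy=(\lambda_0/\lambda_1)\,du=w\,du$. Hence
\[
  \int_{|y|\ge|x|}\frac{\lambda_0 q\,T a(y)}{a(x)+T a(y)}\,dy= q\int_v^\infty\frac{wT(\kappa+v^\delta)}{\kappa+u^\delta+T(\kappa+v^\delta)}\,du,
\]
which is exactly the second term in the exponent. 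Substituting these two evaluations into $V_T(q\,;v^{1/d}\ell_0)$ and then applying $V_T(q)=\int_0^\infty V_T(q\,;v^{1/d}\ell_0)\,e^{-v}\,dv$ from Lemma~1 produces the stated representation.

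I do not expect a real obstacle here; the only thing requiring care is the bookkeeping of the $\ell_0$-scaling and the cancellation of the $1+\kappa$ factors in the radial substitution. One should also check that $a_{\kappa,\beta}$ satisfies the integrability hypothesis preceding Lemma~1 when $\beta>d$, which is immediate: after the same substitution the integrand is $O(u^{-\delta})$ at infinity with $\delta>1$, so all integrals above converge. This is precisely the point already asserted just below (\ref{eq:pathloss}).
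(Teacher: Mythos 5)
Your proposal is correct and follows essentially the same route as the paper's own proof: a change to polar coordinates with $u=|y/\ell_0|^d$ reducing the interference integral to a scalar one, substitution of $a_{\kappa,\beta}$ so the $1+\kappa$ factors cancel, and a final integration against $e^{-v}\,dv$ via the last identity of Lemma~1. Your explicit Jacobian bookkeeping ($\lambda_0\,dy=w\,du$) and the integrability check are details the paper leaves implicit, but the argument is the same.
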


\begin{proof}
We re-write $V_T(q\, ;x)$ in Lemma 1 as  
\begin{align*}
            q e^{-\mu T\sigma^2/a(x)}\exp\Big\{- \frac{\lambda_0 q}{\lambda_1}
        \int_{|x/\ell_0|^d}^\infty \frac{Ta(u^{1/d}\ell_0)}{a(x)+T
             a(u^{1/d}\ell_0)}\,du\Big\},
\end{align*}
after a change to polar coordinates, $u=|y/\ell_0|^d$.  Hence,
changing variable to $v=|x/\ell_0|^d$ and substituting $a$ for $a_{\kappa,\beta}$,
\begin{align*}
  &V_T(q\,; v^{1/d}\ell_0) =\\
           &q \, \exp\Big\{-\frac{\mu T\sigma^2(\kappa+v^\delta)}{1+\kappa}  
           -q \int_v^\infty \frac{wT(\kappa+v^\delta)}
              {\kappa+u^\delta+T(\kappa+v^\delta)}\,du\Big\}.
\end{align*}
The averaged coverage probability $V_T(q)$ is now obtained by
integration over $v$ as specified in Lemma 1.
\end{proof}

\subsection{Loss and delay systems} \label{seq:lossdelay}

Let $Z_n$ be the indicator function of the event that the link is busy and the
transmission is successful in slot $n$,
\begin{equation}\label{eq:success}
  Z_n = \1_{\{\mathrm{SINR}_n(x) >T\}},  \quad n\ge 1.   
\end{equation}
Then $V_T(q_n; x,\Phi_x)=\E[Z_n|\Phi_x]$ and $V_T(q_n)=\E{Z_n}$, by
(\ref{eq:covprobdef}).  Similarly, let $Z_n^+$ be the indicator
function of a successful transmission given that the link is busy in
slot $n$,
\begin{equation*} 
  Z_n^+ = \1_{\{\mathrm{SINR}^+_n(x) >T\}},  \quad
  \mathrm{SINR}^+_n(x) =\frac{S_na(x)}{\sigma^2+\Ic_n(x)}
\end{equation*}
and put $U_n=\E[Z^+_n]$, $n\ge 1$.  Here $Z_n^+$ is independent of
$Q_n$, hence of $A_n$ and $B_{n-1}$. Moreover,
$Z_n\stackrel{d}{=}Q_nZ_n^+$ and $U_n=V_T(q_n)/q_n$. Hence, $U_n$ is
the probability of successful transmission in slot $n$ given that
there is at least one signal to transmit. It remains to identify how
the sequence $(q_n)_{n\ge 0}$ is updated from one slot to the next.

\paragraph{Pure-loss system}

There are no retransmissions carried out by the system and the
signal-to-noise-interference ratio is given by
(\ref{eq:sinr_pureloss}).  Hence the busy link probability is equal to
the arrival probability, $q_n=p$, constant over slots. Letting $L_n$ be
the number of lost signals up to slot $n$, the sequence
$(L_n)_{n\ge 0}$ satisfies
\[
  L_n=L_{n-1}+A_n-Z_n,\quad n\ge 1, \quad L_0=0, 
\]
noting that the difference $A_n-Z_n$ is either zero or one, by
(\ref{eq:sinr_pureloss}). For comparison with the buffer models below
we associate with the pure-loss system an empty buffer of size
$K=0$.

\paragraph{Finite buffer loss and delay system}

For a fixed integer $K\ge 1$ we introduce $(B_n)_{n\ge 0}$ 
by the recursion $B_0=0$ and
\begin{equation}\label{eq:bufferrecursion}
  B_n=B_{n-1}+A_n\cdot (1-\1_{\{B_{n-1}=K\}})-Z_n, \quad n\ge 1,
\end{equation}
and $(L_n)_{n\ge 0}$ by $L_0=0$ and 
\[
 L_n= L_{n-1}+ A_n\cdot \1_{\{B_{n-1}=K\}},  \quad n\ge 1.
\]
Here, $B_n$ is the number of signals stored, hence delayed, in a
physical buffer of maximum buffer capacity $K$ in slot $n$ and $L_n$
is the number of lost signals up to slot $n$. It follows that the
current buffer increases by one unit upon a non-lost arrival and
subsequent transmission failure, while the buffer decreases by one
unit in the event of no arrival or lost arrival combined with the
successful transmission of a buffered signal. Thus, if $B_{n-1}=k$ we
have jump probabilities $\beta_k(n)$ for upward jumps and
$\delta_k(n)$ for downward jumps in slot $n$, given by
\begin{align*}
  &\beta_k(n)=p(1-U_n), \quad 0\le k\le K-1\\
  &\delta_k(n)=(1-p)U_n,\quad 1\le k\le K-1,\quad \delta_K=U_K.
\end{align*}
Also, put $\beta_K(n)=0$ and $\delta_0(n)=0$. We denote by
$M_n=(m_{ij}^n)$ the tri-diagonal $(K+1)\times(K+1)$-matrix with
nonzero elements
\begin{align*} 
  &m^n_{i,i-1}=\delta_i(n),\quad 1\le i\le K\\
  &m_{ii}^n = 1-\beta_i(n)-\delta_i(n),\quad 0\le i\le K\\
    &  m_{i,i+1}^n=\beta_i(n),\quad 0\le i\le K-1.
\end{align*}
For each fixed $n\ge 1$, given $p$ and $q_n$, $0<p<q_n<1$, $M_n$ is
the discrete time transition probability matrix of an irreducible and
aperiodic, homogeneous birth-death Markov chain on
$\{0,\dots,K\}$. The associated unique invariant distribution is the
solution $\nu^n$ of the finite system of linear equations
$\nu^n=\nu^n M_n$. This is a standard example of a Markov chain for
which the stationary distribution is a (variation of) geometric. We
state the result as a Lemma to prepare for analyzing $(B_n)_{n\ge 1}$.
\begin{lemma}\label{lem:buffer_fixn}
  Fix $n\ge 1$, let
\[
    b_n=p(1-U_n)/(1-p)U_n,
\]
and let $\nu^n=(\nu_0^n.\dots,\nu^n_K)$ be the truncated and weighted
geometric probability distribution 
\[
  \nu_k^n=b_n^k\nu_0^n,\quad 0\le k\le K-1,\quad \nu_K^n=(1-p)b_n^K\nu_0^n,
  \]
where
$\displaystyle{
  \nu_0^n =\frac{1-b_n}{1-(p+b_n-pb_n)b_n^K}.}
$
Then $\nu^n=\nu^n M_n$. 
\end{lemma}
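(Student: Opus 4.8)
The plan is to recognize $M_n$ as the transition matrix of a birth--death chain and verify that $\nu^n$ satisfies the local (detailed) balance equations
\[
  \nu^n_k\,\beta_k(n)=\nu^n_{k+1}\,\delta_{k+1}(n),\qquad 0\le k\le K-1,
\]
together with $\sum_{k=0}^K\nu^n_k=1$, since for a tridiagonal stochastic matrix local balance is well known to imply the global balance $\nu^n=\nu^n M_n$. Concretely, writing $m^n_{jj}=1-\beta_j(n)-\delta_j(n)$ and expanding
\[
  (\nu^n M_n)_j=\nu^n_{j-1}\beta_{j-1}(n)+\nu^n_j\bigl(1-\beta_j(n)-\delta_j(n)\bigr)+\nu^n_{j+1}\delta_{j+1}(n),
\]
the two local-balance substitutions collapse the right-hand side to $\nu^n_j$, and the boundary indices $j=0$ and $j=K$, where $\delta_0(n)=0$ and $\beta_K(n)=0$ respectively, work in the same way. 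So the lemma reduces to solving a scalar recursion and then normalizing.

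First I would solve $\nu^n_{k+1}=\nu^n_k\,\beta_k(n)/\delta_{k+1}(n)$ step by step. On the interior range $0\le k\le K-2$ both rates take their bulk values $\beta_k(n)=p(1-U_n)$ and $\delta_{k+1}(n)=(1-p)U_n$, whose ratio is exactly $b_n$, so $\nu^n_k=b_n^k\nu^n_0$ for $0\le k\le K-1$. At the top the exit rate from the full buffer is $U_n$: a fresh arrival at $B_{n-1}=K$ is discarded, so the buffer drops by one precisely on a successful transmission, irrespective of whether a signal arrives. Hence $\nu^n_K=\nu^n_{K-1}\,\beta_{K-1}(n)/U_n=(1-p)b_n\,\nu^n_{K-1}=(1-p)b_n^K\nu^n_0$, which is the stated weighted-geometric form with its single anomalous factor $1-p$.

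Finally I would pin down $\nu^n_0$ by normalization. Summing $\sum_{k=0}^{K-1}b_n^k=(1-b_n^K)/(1-b_n)$ (the degenerate case $b_n=1$, equivalently $U_n=p$, being recovered by continuity) gives
\[
  \nu^n_0\Bigl(\frac{1-b_n^K}{1-b_n}+(1-p)b_n^K\Bigr)=1,
\]
and simplifying the bracket to $\bigl(1-(p+b_n-pb_n)b_n^K\bigr)/(1-b_n)$ yields the claimed expression for $\nu^n_0$. I do not expect a genuine obstacle here; the only points demanding care are using the correct upper-boundary exit rate $U_n$ rather than $(1-p)U_n$ — which is exactly what creates the factor $1-p$ in $\nu^n_K$ — and the elementary algebra that tidies the normalizing constant.
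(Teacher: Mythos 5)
Your proof is correct and is exactly the standard birth--death detailed-balance computation that the paper invokes without writing out (the paper merely remarks that $M_n$ is ``a standard example of a Markov chain for which the stationary distribution is a (variation of) geometric'' and states the lemma without proof). Your identification of the boundary exit rate $\delta_K(n)=U_n$ as the source of the factor $1-p$ in $\nu_K^n$, and your simplification of the normalizing constant, both check out against the paper's formulas.
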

The parameter $q_n$ represents the average probability in slot $n$
that a transmission link, target or interfering, is busy. For this we
now propose to use the steady-state distributions $\nu^n$ and update the
busy-link probabilities accordingly, as  $q_{n+1}=1-(1-p)\nu_0^n$.
Therefore, we consider
\[
    \pi^n=(\pi_0^n,\dots,\pi_K^n),\quad \pi_k^n=\Pb(B_n=k),
\]
and define recursively,  given $p$ and $\pi^0$,
\begin{align}  \nonumber
q_n&=1-(1-p)\nu^{n-1}_0,\quad 
U_n=V_T(q_n)/q_n\\
   \pi^n&=\pi^{n-1} M_n. \label{eq:pi_recursion}
\end{align}
In summary, $(B_n)_{n\ge 0}$ is a discrete time, nonhomogeneous,
birth-death Markov chain defined by the family of probability transition
matrices $(M_n)_{n\ge 0}$.

\paragraph{Infinite buffer no loss system}

We extend to the case $K=\infty$ and ask whether it is possible to run
the system in a no-loss mode where all signals are eventually
retransmitted. Thus, we put $L_n=0$ for every $n$, and suppose
that $(B_n)_{n\ge 1}$ satisfies
\[
  B_n=B_{n-1}+A_n-Z_n,\quad n\ge 1,\quad B_0=0.
\]
For this case let us assume $p< U_n$, and let $M_n^\infty$ be the infinite matrix
defined as $M_n$ but 
without terminal state $K$. Then $0<b_n< 1$ and the geometric
distribution 
$\nu_k^n=b_n^k(1-b_n)$, $k\ge 0$, satisfies $\nu^n=\nu^n M_n^\infty$.
For fixed $n\ge 1$, this is the standard birth-death Markov chain
corresponding to a discrete time $M/M/1$ queue with traffic intensity
$b_n<1$.  Now apply (\ref{eq:pi_recursion}) again for the case $K=\infty$.

To summarize the three cases we note that the sum
$D_n=B_n+L_n$, which measures current buffer size plus the accumulated
number of suffered losses in slot $n$, in each case 
defines a sequence $(D_n)_{n\ge 1}$ which satisfies the Lindley-type recursion
\begin{equation}\label{eq:lindleyprel}
  D_n=D_{n-1}+A_n- Z_n,\quad n\ge 1,\quad D_0=0.
\end{equation}

\subsection{The nonhomogeneous Markov chain $(B_n)$}

A nonhomogeneous Markov chain with an associated sequence of
transition matrices $(M_n)$ is said to be strongly
ergodic if there exists a probability distribution $\pi$ such that for
all $m\ge 1$,
\begin{equation}\label{eq:stronglyergodic}
  \lim_{n\to\infty} \sup_{\mu} d_{TV}(\mu M_m\cdots M_n,\pi)=0,
\end{equation}
where $d_{TV}(\mu,\nu)$ is total variation distance between
probability distributions on $\{0,\dots,K\}$ and the supremum is over
all such distributions \cite[Def.\ 8.2]{B1999}.

We now state and prove the main technical result of this paper, which
is the identification of a distribution $\pi$ for which the buffer
sequence $(B_N)_{n\ge 1}$ is strongly ergodic. This is an application
of a central result for nonhomogeneous Markov chains due to Isaacson
and Madsen \cite{IM1976}.  First, we consider the family of probability
distributions $(\nu^n)_{n\ge 1}$ in Lemma \ref{lem:buffer_fixn} linked
via the sequence $(q_n)_{n\ge 1}$ in (\ref{eq:pi_recursion}).

\begin{lemma} \label{lem:q_seq}

For given $p$, $0<p<1$, take $q_0=p$.
Fix $K\ge 1$. Then $q_n= F(q_{n-1})$, $n\ge 1$, where
$F(q)=1-G(U(q))$, $p\le q\le 1$, and
  \[
    G(u)=\frac{u-p}{u-p\big(p(1-u)/(1-p)u\big)^K}, \quad
  U(q)=\frac{V_T(q)}{q}.
\]
As $n\to\infty$,  $q_n\to q^*$, 
where $q^*=q^*(p)$, $p<q^*<1$, is the minimal solution of the equation
$q=F(q)$.

For $K=\infty$ we have $q_n= p/U(q_{n-1})$. For
every $p$ such that the balance eqution $V_T(q)=p$ has at least one
solution then $\lim_{n\to\infty}q_n=q^*$, where $q^*=q^*(p)$ is the
minimal solution.
\end{lemma}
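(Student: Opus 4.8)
The plan is to treat the finite-$K$ and the $K=\infty$ cases together: in both, the iteration is $q_n=F(q_{n-1})$ for a scalar map $F$ that is nondecreasing and pins $[p,1]$ (resp.\ an interval $[p,q^*)$), so monotone iteration from $q_0=p$ pushes $q_n$ upward to the smallest fixed point of $F$. First I would verify the recursion itself. Substituting the closed form of $\nu_0^{n-1}$ from Lemma \ref{lem:buffer_fixn} into $q_n=1-(1-p)\nu_0^{n-1}$ of (\ref{eq:pi_recursion}) and writing $u=U(q_{n-1})$, $b=p(1-u)/((1-p)u)$, one solves $u=p/(p+(1-p)b)$, so that $u-p=p(1-p)(1-b)/(p+(1-p)b)$ and $u-pb^K=p\big(1-(p+b-pb)b^K\big)/(p+(1-p)b)$; dividing gives $G(u)=(1-p)(1-b)/\big(1-(p+b-pb)b^K\big)=(1-p)\nu_0^{n-1}$, whence $q_n=1-G(U(q_{n-1}))=F(q_{n-1})$. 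For $K=\infty$ the untruncated geometric law has $\nu_0^{n-1}=1-b_{n-1}$, so $q_n=p+(1-p)b_{n-1}=p/U(q_{n-1})$, which is also the $K\to\infty$ limit of $F$ since $b_{n-1}<1$ along the orbit.

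Next I would record the monotonicity and range properties. By Lemma \ref{lem:covprob_updown}, $U(q)=V_T(q)/q=\int_0^\infty g(v)e^{-qh(v)}e^{-v}\,dv$, where $g$ and $h$ are the obvious nonnegative factors read off from Lemma \ref{lem:covprob_updown}, with $g\le 1$ and $h\not\equiv 0$ (because $w,T>0$); hence $U$ is continuous, strictly decreasing on $[p,1]$, and $U(q)<1$ for $q>0$. In the finite-$K$ case, $b=\tfrac{p}{1-p}(\tfrac1u-1)$ is decreasing in $u$, and $\nu_0(b)=(1-b)/\big(1-(p+(1-p)b)b^K\big)$ is continuous on $(0,\infty)$, takes values in $(0,1)$, and is decreasing in $b$ (the empty-state probability of an $M/M/1/K$-type chain as a function of its load; one checks this by differentiation, or by a stochastic-monotonicity coupling); hence $F=1-(1-p)\,\nu_0\circ b\circ U$ is nondecreasing and maps $[p,1]$ into $(p,1)$, so every fixed point of $F$ lies in $(p,1)$. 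In the case $K=\infty$, $F_\infty(q)=p/U(q)$ is strictly increasing, but invariance of an interval is not automatic, and this is precisely where the hypothesis enters: if $q^*$ is the smallest root in $[p,1]$ of $V_T(q)=p$ (which exists by assumption), then $V_T(0)=0$ and continuity force $V_T(q)<p$ on $[0,q^*)$, so $F_\infty(q)=pq/V_T(q)>q$ there, and moreover $p<q^*$ because $V_T(p)=pU(p)<p$.

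Then I would run the monotone iteration. For finite $K$: $q_1=F(p)>p=q_0$ (since $F>p$ everywhere) and $F$ nondecreasing give $q_n\uparrow$; bounded by $1$, the sequence converges to some $q^*=F(q^*)\in(p,1)$ by continuity, and for any fixed point $q'$ the inequality $q_0=p\le q'$ propagates through $F$ to $q_n\le q'$ for all $n$, so $q^*$ is the minimal solution of $q=F(q)$, with $p<q^*<1$. For $K=\infty$: by the previous paragraph $q_0=p\in[p,q^*)$, and inductively $q_{n-1}\in[p,q^*)$ gives $q_n=F_\infty(q_{n-1})\in(q_{n-1},q^*)$, using $F_\infty$ strictly increasing and $F_\infty(q^*)=q^*$; here also $U(q_{n-1})>U(q^*)=p/q^*\ge p$, so $b_{n-1}<1$, which is what justifies the form $q_n=p/U(q_{n-1})$ of the recursion. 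Thus $q_n\uparrow$ and is bounded by $q^*$; its limit $\bar q\le q^*$ satisfies $\bar q=F_\infty(\bar q)$, i.e.\ $V_T(\bar q)=p$, and minimality of $q^*$ forces $\bar q=q^*$.

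The only steps I expect to need genuine work are (i) showing that $\nu_0(b)$ is monotone decreasing in $b$ for the finite-$K$ chain, so that $F$ is monotone --- a concrete but somewhat unpleasant rational-function inequality, alternatively obtainable from a coupling/stochastic-monotonicity argument for $M/M/1/K$ queues in the traffic parameter --- and (ii) for $K=\infty$, recognizing that the iteration cannot be run unconditionally, and that assuming the balance equation $V_T(q)=p$ has a solution is exactly what confines the orbit below its minimal root $q^*$. Everything else is algebra and bookkeeping around Lemmas 1--3.
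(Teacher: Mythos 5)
Your proof is correct and follows essentially the same route as the paper: derive the recursion $q_n=F(q_{n-1})$ by substituting the closed form of $\nu_0^{n-1}$, establish that $F$ is increasing (via $U$ decreasing and $G=(1-p)\,\nu_0\circ b$ increasing, where $1/\nu_0=1+b+\dots+b^{K-1}+(1-p)b^K$ makes the monotonicity in $b$ immediate), and conclude convergence to the minimal fixed point in $(p,1)$. If anything you are more careful than the paper at two points: your monotone-iteration argument ($q_n\uparrow$ and $q_n\le q'$ for every fixed point $q'$) gives global convergence from $q_0=p$ to the \emph{minimal} fixed point, whereas the paper only invokes the local condition $F'(q^*)<1$; and for $K=\infty$ you make explicit that the hypothesis on the balance equation is exactly what confines the orbit to the invariant interval $[p,q^*)$.
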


\begin{proof} For the case $K<\infty$, with $q_0=p$ we put
\[
  \nu_0^0 =\frac{1-b_0}{1-(p+b_0-pb_0)b_0^K},\;
  b_0=\frac{p(1-U_0)}{(1-p)U_0},\; U_0=\frac{V_T(p)}{p}. 
\]
Then, rewriting, $q_1=1-(1-p)\nu_0^0=F(q_0)$. In the same manner
$q_n=F(q_{n-1})$ with $F$ as stated in the Lemma.  It is clear from
Lemma \ref{lem:covprob_updown} that $V_T(q)$ is differentiable and
$qV_T'(q)<V_T(q)$. Hence $U(q)<1$ is a decreasing function of $q$.
Moreover, $G(u)$ is an increasing function of $u\in [0,1]$. Thus,
$F(q)$, $p\le q<1$, is a continuous function which is increasing from
$F(p)=1-G(U(p))>1-G(1)=p$ at the left end point of the interval to
$F(1)=1-G(U(1))<1$ at the right end point. Therefore there must be a
unique, minimal stationary point $q^*\in (p,1)$ of $q=F(q)$, such
that $F'(q^*)<1$.  Hence $\{q_n\}_{n\ge 1}$ converges and the limit
is $q^*$.

The infinite buffer case $K=\infty$ has $F(p)>p$ but now
$F(1)=p/U(1)$ may exceed $1$. Hence in this case we need to find a
minimal solution $q^*$ of $q=F(q)$, i.e.\ $V_T(q^*)=p$, and then
check that $V_T'(q^*)>0$ to verify $F'(q)|_{q=q^*}<1$.
\end{proof}

For given data rate $p$ we can now put $U^*=V_T(q^*)/q^*$ and let
$M^*=M^*(p)$ be the probability transition matrix of the homogeneous
birth-death Markov chain with jump probabilities $\beta_k=p(1-U^*)$,
$0\le k\le K-1$, for upward jumps and $\delta_k=(1-p)U^*$,
$1\le k\le K-1$, $\delta_K=U^*$, for downward jumps.




\begin{theorem} \label{thm:main}
The nonhomogeneous Markov chain $(B_n)_{n\ge 0}$ defined in
(\ref{eq:pi_recursion}) is strongly ergodic such that
(\ref{eq:stronglyergodic}) holds for the distribution $\pi$ given by 
\[
  \pi_k=b^k \pi_0 , \;0\le k\le K-1, \;\;\pi_K=(1-p)b^K\pi_0
\]
and $\pi_0=(1-q^*)/(1-p)$, $b=p(1-U^*)/(1-p)U^*$.
\end{theorem}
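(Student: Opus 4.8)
The plan is to use Lemma~\ref{lem:q_seq} to reduce the problem to a perturbation of a single homogeneous birth--death chain, and then invoke the strong-ergodicity criterion of Isaacson and Madsen \cite{IM1976}, whose hypotheses are (i) \emph{weak ergodicity} of the family $(M_n)$ and (ii) convergence of the invariant distributions $\nu^n$ of $M_n$ to a limit $\pi$, with summable increments.

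\emph{Step 1 (the matrices and their invariant laws converge).} By Lemma~\ref{lem:q_seq}, $q_n\to q^*$ with $p<q^*<1$. Since $V_T$ is continuous on $(0,1)$ (indeed differentiable, as used in the proof of Lemma~\ref{lem:q_seq}) and strictly positive there by the representation in Lemma~\ref{lem:covprob_updown}, we get $U_n=V_T(q_n)/q_n\to U^*=V_T(q^*)/q^*$, with $U^*>0$, and $U^*<1$ from the inequality $qV_T'(q)<V_T(q)$ established in the proof of Lemma~\ref{lem:q_seq}. Hence the jump probabilities $\beta_k(n)=p(1-U_n)$, $\delta_k(n)=(1-p)U_n$, $\delta_K(n)=U_n$ converge to those of $M^*$, so $M_n\to M^*$ entrywise. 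Plugging $q^*$, $U^*$ and $b=p(1-U^*)/((1-p)U^*)$ into the formulas of Lemma~\ref{lem:buffer_fixn} shows $\nu^n\to\pi$ with $\pi$ as in the statement; the fixed-point identity $q^*=F(q^*)=1-(1-p)\nu_0$ then gives $\pi_0=(1-q^*)/(1-p)$, so in particular $\pi=\pi M^*$. Finally, since $F'(q^*)<1$ (Lemma~\ref{lem:q_seq}), $q_n\to q^*$ geometrically, whence $\sum_n\|\nu^{n+1}-\nu^n\|<\infty$.

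\emph{Step 2 (weak ergodicity).} For $n$ past some $N$, $q_n$, and hence $U_n$, lies in a compact subinterval of $(p,1)$, resp.\ of $(0,1)$; therefore there is $\eta\in(0,1)$ with $\delta_k(n)\ge\eta$ for $1\le k\le K$ and $m_{ii}^n\ge\eta$ for $0\le i\le K$, for all $n\ge N$. On the state space $\{0,\dots,K\}$ a birth--death chain reaches state $0$ from any state $i$ in exactly $K$ steps (step down $i$ times, then hold $K-i$ times), so the $K$-step product $M_n\cdots M_{n+K-1}$ has its $0$-th column bounded below by $\eta^K$, and its Dobrushin ergodic coefficient is at most $1-\eta^K<1$. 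Splitting the tail into the disjoint blocks $[N+jK,\,N+(j+1)K-1]$, $j\ge 0$, and using submultiplicativity of the ergodic coefficient, $\delta\big(M_m\cdots M_n\big)\le(1-\eta^K)^{\lfloor(n-m)/K\rfloor-1}\to 0$ for each fixed $m$, which is exactly weak ergodicity of $(M_n)$.

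\emph{Step 3 (conclusion) and the main obstacle.} Steps 1 and 2 supply precisely the hypotheses of the Isaacson--Madsen theorem \cite{IM1976}: the nonhomogeneous chain is weakly ergodic and the invariant distributions $\nu^n$ of $M_n$ converge, with summable increments, to $\pi$; the theorem then yields strong ergodicity, i.e.\ (\ref{eq:stronglyergodic}), with limit $\pi$. I expect the one genuinely delicate point to be the uniform minorization in Step~2: it needs the quantitative facts $U^*\in(0,1)$ and $q^*\in(p,1)$ (so the down- and hold-probabilities stay bounded away from $0$ along the whole tail), not merely the existence of $q^*$. The possibly near-degenerate first slots, where $q_n$ is close to $p$, cause no difficulty, since weak ergodicity depends only on the tail of $(M_n)$; likewise the identification of $\pi_0$ is a one-line consequence of the fixed-point equation once Step~1 is in place.
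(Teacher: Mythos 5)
Your proof is correct, but it runs along a genuinely different route than the paper's. The paper invokes the ``packaged'' form of the Isaacson--Madsen criterion (\cite[Theorem V.4.5]{IM1976}): if $\|M_n-M^*\|\to 0$ in the row-sum norm and $M^*$ is the transition matrix of an ergodic finite chain, strong ergodicity follows with limit the invariant law of $M^*$. Both hypotheses drop out of Lemma~\ref{lem:q_seq} ($q_n\to q^*$, continuity of $q\mapsto V_T(q)/q$) plus the obvious irreducibility and aperiodicity of $M^*$, so the paper's argument is three lines plus the identification of $\pi$ via $q^*=F(q^*)$. You instead verify the two primitive hypotheses of the underlying theorem --- weak ergodicity of $(M_n)$ and summable variation of the stationary laws $\nu^n$ --- by hand: a Dobrushin-coefficient minorization through state $0$ for the weak ergodicity, and the geometric rate $q_n\to q^*$ coming from $F'(q^*)<1$ for the summability. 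This is essentially a proof, in this concrete birth--death setting, of the corollary the paper cites. What your route buys is quantitative content (an explicit contraction constant $1-\eta^K$ per block of $K$ slots, hence a rate of convergence in (\ref{eq:stronglyergodic})) and independence from the black-box corollary; what it costs is that you genuinely need a \emph{rate} for $q_n\to q^*$ --- summability of $\|\nu^{n+1}-\nu^n\|$ does not follow from convergence alone --- so your argument leans on $F'(q^*)<1$ from Lemma~\ref{lem:q_seq}, whereas the paper's route needs only $\|M_n-M^*\|\to 0$ with no rate. Two small remarks: the bound $U^*<1$ follows most directly from $V_T(q)<q$ (the integrand in Lemma~\ref{lem:covprob_updown} is $q$ times a factor strictly less than one), rather than from $qV_T'(q)<V_T(q)$, which is what gives monotonicity of $U$; and for the row $i=0$ of your minorization you should use $m^n_{00}=1-p(1-U_n)\ge 1-p$ rather than the generic diagonal bound, though this does not affect the conclusion. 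The identification of $\pi_0=(1-q^*)/(1-p)$ via the fixed-point equation is the same in both proofs.
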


\begin{proof}

  Observing that $\|A\|=\sup_{0\le i\le K} \sum_{j=0}^Ka_{ij}$ is a
  norm on the vector space of square matrices $A=(a_{ij})$, we
  conclude from Lemma \ref{lem:q_seq} that $\|M_n-M^*\|\to 0$,
  $n\to\infty$.  Moreover, $M^*$ is the transition matrix of an
  ergodic Markov chain.  These properties of $(M_n)$ are exactly the
  sufficent conditions for strong ergodicity obtained in \cite[Theorem
  V.4.5]{IM1976}, also quoted in \cite[Theorem 6.8.5, without
  proof]{B1999}. The corresponding limit distribution $\pi$ is the
  unique invariant distribution of $M^*$, for which $\pi=\pi
  M^*$. It is straightforward to check the form of $\pi_k$, noticing
  that the normalization $\pi_0=(1-q^*)/(1-p)$ is the relation
  $q^*=F(q^*)$ in Lemma \ref{lem:q_seq}.
\end{proof}


\subsection{Time-averaged throughput} \label{seq:time-average}

To analyze the delay-loss systems we first
have, by (\ref{eq:lindleyprel}),
\[
  \frac{B_n}{n} +\frac{L_n}{n}
  =\frac{1}{n} \sum_{r=1}^n (A_r - Z_r).
\]
The rightmost representation is a sum of
independent random variables with expected value
\[
  \E\Big[\frac{B_n+L_n}{n}\Big] 
  =p- \frac{1}{n} \sum_{r=1}^n V_T(q_r).  
\]
Thus, using Lemma \ref{lem:q_seq} for the case $K\ge 1$,
by the Kolmogorov strong law of large numbers and the continuity of $V_T$
in $q$, we have the almost sure convergence as $n\to\infty$,
\begin{equation}\label{eq:limit_Dn} 
  n^{-1}\E[B_n+L_n]\longrightarrow p-V_T(q^*).
\end{equation}
To interpret the above limit we introduce the {\it loss probability}
\[
  P_\mathrm{loss}(p,q)=1-V_T(q)/p,\quad 0\le p\le q\le 1.
\]
In the case of pure-loss, $K=0$, the convergence (\ref{eq:limit_Dn})
establishes the loss rate
\begin{equation}\label{eq:purelossprob}
  n^{-1}\E[L_n]\to p-V_T(p)=p P_\mathrm{loss}(p,p).
\end{equation}
Similarly, with a finite buffer $1\le K<\infty$ we have
$B_n\le K$ so $B_n/n\to 0$ as $n\to\infty$, and hence the asymptotic loss rate
\[
  n^{-1}\E[L_n]\to p P_\mathrm{loss}(p,q^*).
\]
To extract additional information we may analyze the buffer sequence
in (\ref{eq:bufferrecursion}) alone, using
\[
  \frac{B_n}{n} 
 = \frac{1}{n}\sum_{r=1}^n \{ A_r\cdot (1-\1_{\{B_{r-1}=K\}}) -Z_r\}.
\]
Again, $B_n/n\to 0$ as $n\to\infty$ since $B_n\le K$. Hence by the
law of large numbers, 
  $\lim_{n\to\infty} p (1-\Pb(B_n=K))-V_T(q^*)=0$,
and so 
\begin{equation}\label{eq:balance_loss}
  \lim_{n\to\infty} \Pb(B_n=K)=1-V_T(q^*)/p=P_\mathrm{loss}(p,q^*),
\end{equation}
which confirms that losses in the system are only incurred during
slots at which $B_n=K$, and also that $V_T(q^*)$ is the {\it
  throughput} of the system in the sense measured by the limit of
$p(1-\Pb(B_n=K))$, see e.g.\ \cite{K2002} for general remarks on
performance measures.

The corresponding result for the infinite buffer
no loss model is $B_n/n\to p-V_T(q^*)$, and hence to maintain
the retransmission system in steady-state without losses we must find
a solution $(p,q^*)$ of the {\it balance equation} $p-V_T(q^*)=0$.






\begin{corollary} \label{cor:kpi}

1) Fix an integer buffer size $K\ge 1$. For given data rate $p$,
$0<p<1$, let $q^*=q^*(p)$ be the limiting busy-link probability introduced in
Lemma \ref{lem:q_seq}.
Then
the throughput equals
\[
  p(1-\pi_K)=V_T(q^*),
\]
the loss probability is
\[
  P_\mathrm{loss}(p,q^*)=\pi_K=1-\frac{V_T(q^*)}{p},
\]
and the average delay, or latency, in the model, obtained by
a standard application of Littles formula as
$D(p)=\E_\pi[B]/p$, is 
\begin{equation} \label{eq:delay_thm}
D(p)=\frac{V_T(q^*)(q^*-V_T(q^*))-q^*K(p-V_T(q^*))}{p(V_T(q^*)-pq^*)}.
\end{equation}

\noindent
2) Let $p_c$ be such that for every $p<p_c$, the non-linear equation
$V_T(q)=p$ has a minimal solution $q^*=q^*(p)$, $p<q^*<1$. Then the infinite
buffer distribution arising in the limit $K\to\infty$ is the regular
geometric distribution
\[
  \pi_k = \Big(\frac{q^*-p}{1-p}\Big)^k\frac{1-q^*}{1-p}, \quad k\ge 0,
\]
which has throughput $p$ and latency $D(p)=(q^*-p)/(1-q^*)p$.
\end{corollary}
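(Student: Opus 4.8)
The plan is to extract the throughput and loss probability from the strong ergodicity of Theorem~\ref{thm:main} combined with the time-average balance relation (\ref{eq:balance_loss}), and then to obtain the delay by an elementary evaluation of the stationary mean $\E_\pi[B]$. First, for part~1: strong ergodicity forces convergence of the one-slot marginals, since applying (\ref{eq:stronglyergodic}) with $\mu=\pi^{m-1}$, the law of $B_{m-1}$, gives $\pi^n=\pi^{m-1}M_m\cdots M_n\to\pi$ in total variation, hence $\Pb(B_n=k)\to\pi_k$ for each $k\in\{0,\dots,K\}$. Taking $k=K$ and comparing with (\ref{eq:balance_loss}) yields $\pi_K=\lim_n\Pb(B_n=K)=1-V_T(q^*)/p=P_\mathrm{loss}(p,q^*)$, which is the loss probability; multiplying by $p$ and rearranging gives $p(1-\pi_K)=V_T(q^*)$, the throughput, consistent with the interpretation recorded after (\ref{eq:balance_loss}).

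For the delay in part~1 I would compute $\E_\pi[B]=\sum_{k=0}^{K}k\pi_k$ directly from the explicit $\pi$ of Theorem~\ref{thm:main}. Writing $\E_\pi[B]=\pi_0\sum_{k=1}^{K-1}kb^k+K(1-p)b^K\pi_0$ and inserting the standard closed form of $\sum_{k=1}^{K-1}kb^k$, I would substitute $U^*=V_T(q^*)/q^*$, $b=p(1-U^*)/((1-p)U^*)$ and $\pi_0=(1-q^*)/(1-p)$. The key simplification is supplied by two identities: $1-b=(V_T(q^*)-pq^*)/((1-p)V_T(q^*))$ and its companion $b^K=(p-V_T(q^*))/(p(1-q^*))$, the latter being precisely the fixed-point relation $q^*=F(q^*)$ of Lemma~\ref{lem:q_seq} rewritten (and which, incidentally, re-confirms $\pi_K=1-V_T(q^*)/p$ algebraically). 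The second identity replaces every $b^K$, $b^{K+1}$ and $Kb^K$ term by an expression free of $K$-dependent powers, cancels one factor of $V_T(q^*)-pq^*$, and leaves exactly (\ref{eq:delay_thm}) after dividing by $p$ as in Little's formula. I expect this bookkeeping — tracking the $b^K$-type contributions and recognizing the cancellations forced by $q^*=F(q^*)$ — to be the only real obstacle, and it is routine.

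For part~2 I would pass to the limit $K\to\infty$ in the formulas of part~1. When $p<p_c$, the $K=\infty$ case of Lemma~\ref{lem:q_seq} shows the limiting busy-link probability solves the balance equation $V_T(q^*)=p$, hence $U^*=p/q^*$ and $b=p(1-U^*)/((1-p)U^*)=(q^*-p)/(1-p)$, with $0<b<1$ because $p<q^*<1$. In the expressions for $\pi$ from Theorem~\ref{thm:main} the boundary mass $\pi_K=(1-p)b^K\pi_0\to0$, the normalizer $\pi_0\to 1-b=(1-q^*)/(1-p)$, and for each fixed $k$, $\pi_k=b^k\pi_0\to b^k(1-b)=((q^*-p)/(1-p))^k(1-q^*)/(1-p)$, which is the asserted regular geometric law. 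The throughput is then $V_T(q^*)=p$, i.e.\ no loss, and the latency is either read off from $\E_\pi[B]=\sum_{k\ge0}kb^k(1-b)=b/(1-b)=(q^*-p)/(1-q^*)$, so that $D(p)=\E_\pi[B]/p=(q^*-p)/((1-q^*)p)$, or obtained by setting $V_T(q^*)=p$ in (\ref{eq:delay_thm}), whereupon the term proportional to $K(p-V_T(q^*))$ vanishes and the remaining quotient collapses to the same value. The one point deserving a word is that the limits are taken in the order $n\to\infty$ then $K\to\infty$; since the inner limit is available in closed form this interchange is harmless, but it should be stated explicitly.
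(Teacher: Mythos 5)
Your proposal is correct and follows essentially the same route as the paper: it identifies $\pi_K$ by matching the strong-ergodicity limit of $\Pb(B_n=K)$ from Theorem \ref{thm:main} with the time-average relation (\ref{eq:balance_loss}), and then obtains the delay by evaluating $\E_\pi[B]$ explicitly and simplifying via the fixed-point identity $q^*=F(q^*)$ (equivalently $b(1-p)V_T(q^*)=p(q^*-V_T(q^*))$), with part 2 recovered as the $K\to\infty$ limit. The algebraic identities you single out, $1-b=(V_T(q^*)-pq^*)/((1-p)V_T(q^*))$ and $b^K=(p-V_T(q^*))/(p(1-q^*))$, are exactly the substitutions the paper uses.
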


\begin{proof}

Using Theorem \ref{thm:main} together with
(\ref{eq:balance_loss}), 
\begin{equation}\label{eq:balance_K}
  p(1-\pi_K)
  =p \cdot \frac{1-q^*}{1-p}
 \cdot \frac{1-b^K}{1-b}=V_T(q^*),
\end{equation}
To obtain the latency we use
\begin{equation*}\label{eq:delay}
  \E_\pi[B]=\frac{\pi_0}{1-b}\Big\{b\frac{1-b^K}{1-b}-Kb^K (p+(1-p)b)\Big\}
\end{equation*}
and apply
$\pi_K=(1-p)b^K\pi_0=1-V_T(q^*)/p$ and (\ref{eq:balance_K})
to recognize
\begin{align*}
  \E_\pi[B]&=\frac{1}{1-b}\Big\{b
             \frac{V_T(q^*)}{p}-K\frac{1-V_T(q^*)/p}{1-p}(p+(1-p)b)\Big\}.
\end{align*}
The representation (\ref{eq:delay_thm}) follows after the final step
of inserting the relation $b(1-p)V_T(q^*)=p(q^*-V_T(q^*))$.
\end{proof}

\section{Analysis of some cases of the traffic models}

For the pure loss case $K=0$, since the busy link probability $q$
coincides with the arrival probability $p$, the coverage
probability obtained in Lemma \ref{lem:covprob_updown} is ready
to use.  The loss rate is derived in (\ref{eq:purelossprob}).
For finite $K\ge 1$, a natural procedure is to start with the coverage
probability function $V_T$ and for each $p$ compute iteratively busy
link probabilities $q_n$, throughput $V_T(q_n)$, loss rate
$P_\mathrm{loss}(p,q_n)$, etc, for $n\ge 1$, $q_0=p$, with $n$ large
enough until the sequences converge within limits of a preset
numerical accuracy.  This yields approximations of the key performance
indicators obtained in Corollary \ref{cor:kpi} as functions of
data rate $p$, $0< p<1$.

For special choices of parameter values the results may simplify.  Let
us introduce the function
\[
  K_\delta(r)=\int_r^\infty  \frac{du}{1+u^\delta},\quad r\ge 0,
\]
and put
\[
  C=C_{T,\delta,w}= T^{1/\delta}K_\delta(1/T^{1/\delta}) w,\quad w=\lambda_0/\lambda_1.
\]
Using Lemma \ref{lem:covprob_updown}
with parameters $\kappa=0$ and $\sigma^2>0$, we recover the result in
Andrews, Baccelli, Ganti 2011, Theorem 1, namely
\begin{equation}\label{eq:explicitV}
V_T(q)=q \int_0^\infty e^{-qCv}  
e^{-\mu\sigma^2 T v^\delta} e^{-v}\,dv.
\end{equation}
In particular, for $\kappa=\sigma^2=0$,
\begin{equation}\label{eq:Vexplicit}
V_T(q)=\frac{q}{1+Cq}=\frac{q}{1+T^{1/\delta}K_\delta(1/T^{1/\delta})w\,q}.  
\end{equation}
A simple example is pure loss, $K=0$, with $\kappa=\sigma^2=0$, for
which 
the asymptotic loss rate is $n^{-1}L_n\to  Cp^2/(1+Cp)$.

\subsection{Finite buffer, the case $K=1$}\label{sec:K1}

\begin{figure}
 \includegraphics[width=0.5\textwidth]{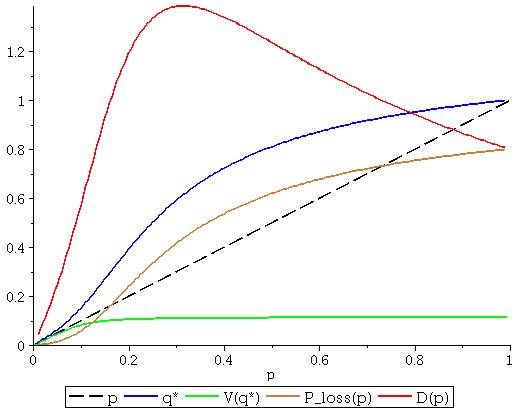}

\caption{The case $K=1$, $\kappa=\sigma^2=0$, $C=4$. 
Data rate  $p$, busy-link probability $q^*$, throughput $V_T(q^*)$,
loss probability $P_\mathrm{loss}(p,q^*)$, latency $D(p)$ (gold).
}
\label{fig:K1new}
\end{figure}


By Lemma \ref{lem:q_seq}, for $K=1$ and $0<p<1$, the limiting busy link
probability $q=q^*$  is the minimal solution of the balance equation 
$V_T(q)=p(1-q)/(1-p)$ 
and then the buffer size distribution is obtained as
$\pi_0=(1-q)/(1-p)$, $\pi_1=(q-p)/(1-p)$.
With the coverage probability in (\ref{eq:Vexplicit})
this is a second order equation with solution
\[
  q^*=(\sqrt{(1-Cp)^2+4Cp^2}-(1-Cp))/2Cp,
\]
hence the throughput as a function of $p$ equals
\[
  V_T(q^*)=\frac{1}{2C(1-p)}\Big(1+Cp-\sqrt{(1-Cp)^2+4Cp^2}\Big). 
\]
The delay is
\[
  D(p)=\E_\pi[B]/p=\pi_1/p=(q^*-p)/p(1-p),
\]
which may be compared with the loss probability
$P_\mathrm{loss}(p,q^*)=1-V_T(q^*)/p$.  Figure \ref{fig:K1new} for
$K=1$ visualizes the busy link probability, the coverage probability,
the latency, and the loss probability as functions of $p$, for fixed
$T$, $\delta$ and $w$ such that $C_{T,\delta,w}=4$. The latency
represents time in buffer per arrival of new data. Hence the decrease
in latency $D(p)$ for large data rates $p$ does not reflect improved
performance but is merely a consequence of larger losses while
throughput quickly reaches the maximum level $V_T(1)=1/5$.


\subsection{No loss, the case $K=\infty$}

Again we study the explicit situation in (\ref{eq:Vexplicit})
with $\kappa=0$ and $\sigma^2=0$.
For the no-loss model with given $w$ and $\delta$, and
a fixed threshold $T$, there is a critical probability $p_c=p_c(T)$ such that
for $0<p\le p_c$, the balance equation, $V_T(q)=p$, has a uniqe solution
$q^*$, $p<q^*\le 1$, given by
\[
  q^*=\frac{p}{1-C p},\quad p\le p_c=\frac{1}{1+C}.
\]
Equivalently, for fixed $p$ there is a critical threshold parameter
$T_\mathrm{max}$ such that, if 
\[
  T\le T_\mathrm{max}=\sup\Big\{y:
  yK_\delta(1/y)<\frac{1}{w}\Big(\frac{1}{p}-1\Big)\Big\}^\delta,
\]
then again the balance equation has the unique solution $q^*=p/(1-C p)$.
The range of $T$-values is well-defined since the function
$y\mapsto yK_\delta(1/y)$ is increasing. The buffer size has the
geometric distribution
\[
  \Pb(B=k)=\pi_k=b^k(1-b), \; k\ge 0, \quad
  b=\frac{Cp^2}{(1-Cp)(1-p)}
\]
and the delay is 
\[
D(p)= \E_\pi(B)/p=Cp/(1-p(1+C)),\quad p<p_c.
\]


\begin{figure}
  \includegraphics[width=0.45\textwidth]{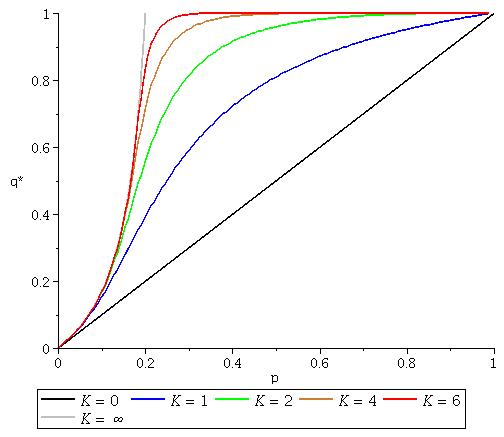}   
\includegraphics[width=0.45\textwidth]{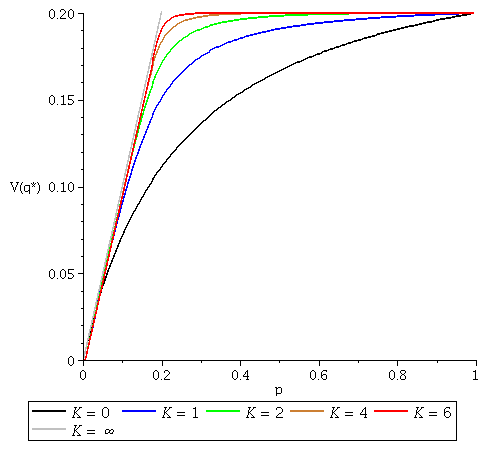} 

\caption{The case $\kappa=0$, $\sigma^2=0$. Critical arrival
  probability $p_c=1/5$. Upper panel: Busy link probabability $q^*$ as
  function of $p$ for varying buffer size $K$. Lower panel:
  Coverage probability $V_T(q^*)$ for varying $K$.}
\label{fig:QV}
\end{figure}

\subsection{Loss and delay as functions of buffer size $K$, $\kappa=\sigma^2=0$}

To study how the loss and delay indicators change with buffer
size $K$ it is convenient to continue with 
the reference case $\kappa=\sigma^2=0$.
We have discussed the solutions $q^*$ and $V(q^*)$ for
$K=0$, $K=1$, and $K=\infty$.
For $K=2,4,8$, additional numerical solutions are
displayed graphically in Figure \ref{fig:QV}. The parameter $C=4$ is
an arbitrary choice.


The corresponding loss probability $P_\mathrm{loss}(p,q^*)$ and the buffer delay
$D(p)$ are  shown together in Figure \ref{fig:LDlog}, upper panel.  To reveal
more detail, these graphs  have a logarithmic scale and
focus on the most relevant parameter values $p$ not too far from
$p_c=1/6$, which is the upper bound for the infinite buffer case.
\begin{figure}
  \quad
  \includegraphics[width=0.45\textwidth]{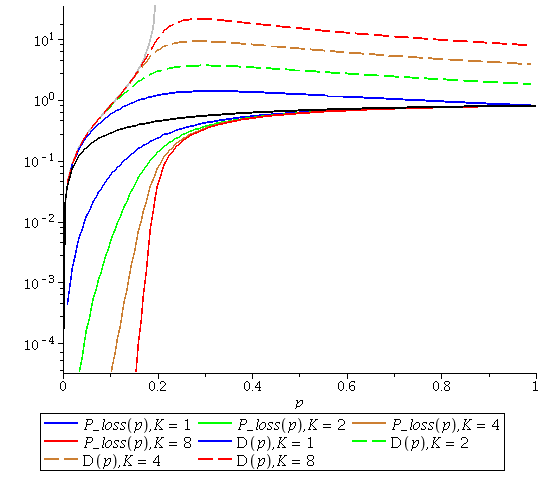} 

  \vskip 3mm
  \includegraphics[width=0.45\textwidth]{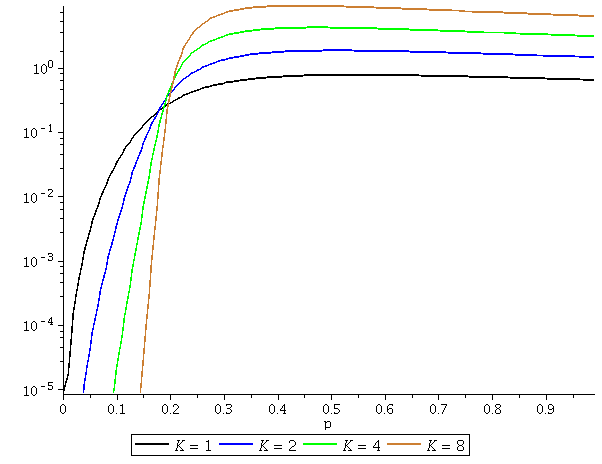} 
\caption{Upper panel: Logarithmic plots of loss probability (solid
  lines) and delay (dashed lines).
  Lower panel:  Logarithmic plots of the loss-delay product.
}

\label{fig:LDlog}
\end{figure}
The graphs confirm intuition that large buffer size favors reliability
at the cost of latency and lead us to consider the trade-off between
reliability and latency in the system subject to a given data rate.
For $p$ well below $p_c$ it is seen that the loss probability is
decreasing and hence the system is increasingly reliable with larger
$K$, while latency as encoded by the buffer delay is still kept at
reasonable levels. For $p$ near or above $p_c$, on the other hand, the
delay builds up with increasing $K$ while the gain in smaller loss
becomes less pronounced.  These insights suggest that the
loss-delay product
\[
  LD(p)=(1-V_T(p,q^*)/p)\cdot D(p),\quad 0<p<1,
\]
is a potentially relevant performance measure. Indeed, Figure
\ref{fig:LDlog}, lower panel,  displayed in logarithmic scale indicates a clear
shift in $LD$-performance around the critical $p_c$. Large buffer size
is preferred at lower traffic intensities but quickly turns
counter-productive as soon as the system is subject to heavier data
rates $p$ ranging above $p_c$.

In principle it is now possible to analyze a network with specified
loss and delay requirements. Given network parameters $w$, $T$ and
$\delta$, suppose we fix a maximal loss probability $L_\mathrm{max}$
and maximal average delay $D_\mathrm{max}$.  Assuming we know that the
network is operating at maximal data rate $p_0$, find for which buffer
size values $K$ it holds that
$P_\mathrm{loss}(p,q^*)\le L_\mathrm{max}$ and
$D(p)\le D_\mathrm{max}$, for all $p\le p_0$. If there is a range of
such $K$-values we are free to pick a prefered $K$ perhaps based on
additional priorities for loss versus delay. If there is no such $K$
the desired loss-delay requirements are not viable for the system.
Figure \ref{fig:LDmax} provides a display. For example, with
$p_0=0.09$ then for $D_\mathrm{max}=6$ we find $K<10$ and for
$L_\mathrm{max}=0.03$ we find $K>3$. So for $4\le K\le 9$ both of the
loss and delay criteria are fulfilled for every $p<p_0=0.09$.

\begin{figure}
\includegraphics[width=0.45\textwidth]{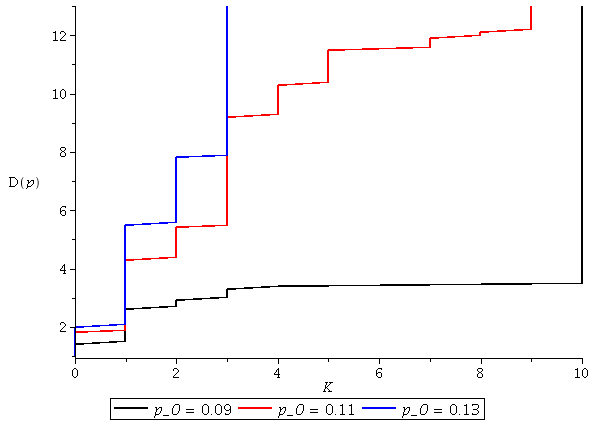}
\includegraphics[width=0.45\textwidth]{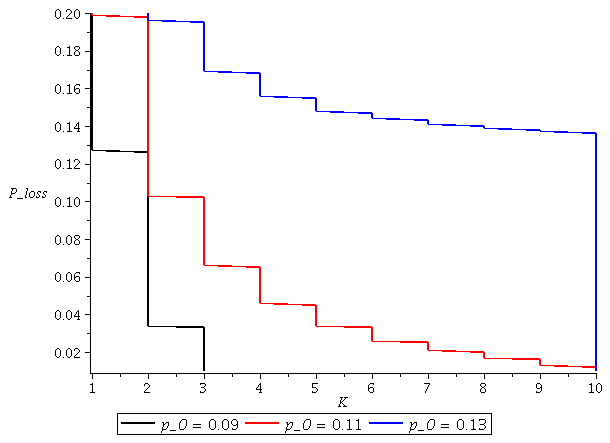}

\caption{Choice of buffer size $K$ to
  provide preset loss and delay criteria
}
\label{fig:LDmax}
\end{figure}

\subsection{More realistic scenarios, $\sigma^2>0$ and/or $\kappa>0$}

Moving away from the simplest coverage probability function $V_T(q)=q/(1+Cq)$,
we consider briefly the case (\ref{eq:explicitV}) with unbounded path loss
($\kappa=0$) but nonzero deterministic noise $\sigma^2>0$ and the  
general coverage probability for bounded path loss function,
$\kappa>0$, derived in Lemma \ref{lem:covprob_updown}.
The display in Figure \ref{fig:Vofq_general} shows the shape of
$V_T(q)$ as a function of the busy link probability $q$, for various
combinations of $\sigma^2$ and $\kappa$, and other parameters fixed at
$\mu=T=1$, $\delta=2$, $w=10$.  The feature that stands out is that
for $\kappa>0$, $V_T(q)$ may no longer be an increasing function of
$q$. As a consequence the balance equation (\ref{eq:balance_K}) may
have more than one solution, noting that the properties in Theorem
\ref{thm:main} refer to the minimal solution.

\begin{figure}
\includegraphics[width=0.45\textwidth]{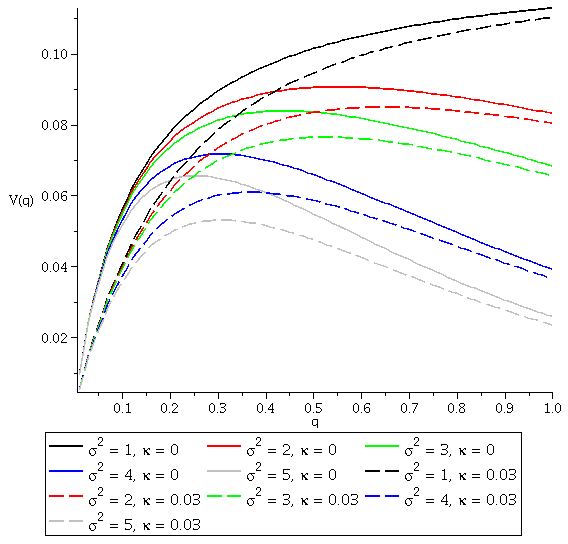} 
\includegraphics[width=0.45\textwidth]{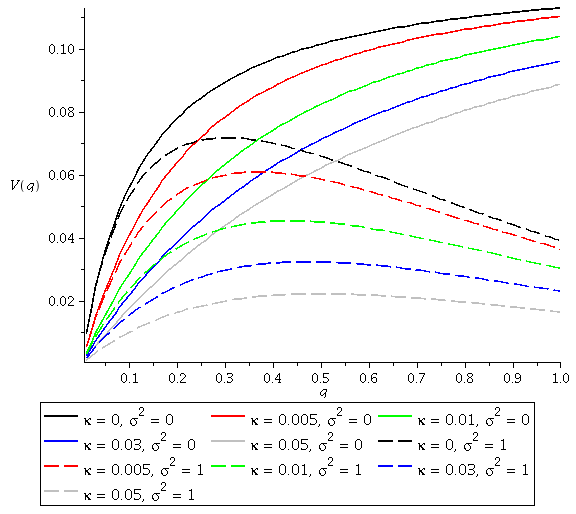}
\caption{The coverage probability function $V_T(q)$ for various
  choices of parameters $\kappa$ and $\sigma$.
}
\label{fig:Vofq_general}
\end{figure}

Finally, given $K$, solving iteratively for a minimal solution $q^*$,
we obtain an approximation of the throughput $V_T(q^*)$ as a function
of the external data rate $p$. The throughput is reduced in comparison
with the unbounded case $\kappa=0$, as indicated in Figure
\ref{fig:Vofp_general}.

\begin{figure} 
  \includegraphics[width=0.45\textwidth]{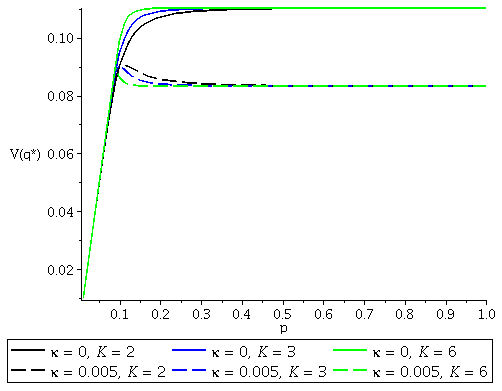}  
\caption{Comparison of throughput, singular attenuation $\kappa=0$,
  bounded attenuation
  $\kappa=0.005$. 
}
\label{fig:Vofp_general}
\end{figure}

\end{document}